\documentclass[a4paper]{amsart}

\usepackage{hyperref}

\usepackage[initials,nobysame,alphabetic,msc-links]{amsrefs}

\usepackage{amssymb}
\usepackage{stmaryrd, slashed} 

\usepackage{amsthm}
\theoremstyle{plain}
\newtheorem{thm}[]{Theorem}
\newtheorem{prop}[]{Proposition}
\newtheorem{lem}[prop]{Lemma}
\newtheorem{cor}[prop]{Corollary}

\theoremstyle{definition}

\theoremstyle{remark}
\newtheorem{rmk}[prop]{Remark}
\newtheorem{example}[prop]{Example}

\newcommand{\ensemble}[1]{\left\{ #1 \right\}}
\newcommand{\suchthat}{\mid}

\newcommand{\absolute}[1]{\left| #1 \right|}
\newcommand{\pairing}[2]{\left \langle #1, #2 \right\rangle}

\newcommand{\Z}{\mathbb{Z}}

\newcommand{\C}{\mathbb{C}}
\newcommand{\R}{\mathbb{R}}

\newcommand{\nat}{\text{(nat)}}

\newcommand{\triv}{\text{triv}}

\newcommand{\Tang}{\mathrm{T}}
\newcommand{\spinbdl}{\mathbb{S}}

\newcommand{\LevCiv}{\mathrm{LC}}

\newcommand{\diracop}{\slashed{\partial}}

\newcommand{\liealg}[1]{\mathfrak{#1}}
\newcommand{\proj}{\text{pr}}
\newcommand{\vbdl}[1]{\mathcal{#1}}
\newcommand{\UniEnv}{\mathcal{U}}

\DeclareMathOperator{\Tr}{Tr}
\DeclareMathOperator{\Ad}{Ad}

\DeclareMathOperator{\Aut}{Aut}
\DeclareMathOperator{\dom}{dom}
\DeclareMathOperator{\ch}{ch}

\DeclareMathOperator{\ind}{ind}

\DeclareMathOperator{\SO}{SO}
\DeclareMathOperator{\SU}{SU}
\DeclareMathOperator{\PU}{PU}
\DeclareMathOperator{\KK}{{\mathit{KK}}}
\DeclareMathOperator{\Cliff}{\mathrm{Cl}_{\C}}
\DeclareMathOperator{\supp}{supp}

\begin{document}

\title[character of projective Dirac operator]{Index character associated to the projective Dirac operator}
\author{Makoto Yamashita}
\date{\today}       
\thanks{This work has been supported by the Marie Curie Research Training Network
MRTN-CT-2006-031962 in Noncommutative Geometry, EU-NCG.}
\email{makotoy@ms.u-tokyo.ac.jp}
\keywords{twisted index theory, Clifford module}
\subjclass[2010]{Primary 58J22; Secondary 35K05}
\begin{abstract}
 We calculate the equivariant index formula for an infinite dimensional Clifford module canonically associated to any Riemannian manifold.  It encompasses the fractional index formula of the projective Dirac operator by Mathai--Melrose--Singer.
\end{abstract}

\maketitle
\section{Introduction}
\label{sec:introduction}

Mathai, Melrose, and Singer constructed the projective Dirac operator $\diracop^\proj_M$ for arbitrary Riemannian manifold $M$ as a projective pseudo-differential operator in \cite{MR2258800}.  Its index $\ind_a \diracop^\proj_M$ was defined in terms of the integral kernel $I^{\proj}_{\diracop_M}$ of this projective differential operator.  They showed the \textit{fractional index formula} \cite{MR2258800}*{Theorem 2 and Section 9}
\begin{equation}
  \label{eq:mms-frac-index-formula}
  \ind_a \diracop^\proj_M = \int_M \hat{A}(R_M),
\end{equation}
where the integrand of the right hand side is the $\hat{A}$-form associated to the Riemannian curvature of $M$.  The equality was proved via local index calculation of the heat kernel associated to $I^{\proj}_{\diracop_M}$.  It can be easily seen that the above expression might be a non-integral rational number when $M$ is not spin.

The framework of projective pseudo-differential operators ignited various developments of the study of modules and index theory over the bundles of finite dimensional algebras (or the ones whose fiber is the trace class operator algebra) over manifolds \citelist{\cite{MR2140985}\cite{MR2674880}\cite{arXiv:0907.1257}\cite{arXiv:1007.3667}\cite{arXiv:1011.5800}}.

The projective operator $\diracop^\proj_M$ is associated to a certain Clifford module of $M$, but one has to note the difference between \eqref{eq:mms-frac-index-formula} and the usual index formula
\begin{equation}
\label{eq:cliff-mod-ind-thm}
\ind \diracop_E = \int_M \hat{A}(R_M) \ch(E/\spinbdl),
\end{equation}
for a Clifford module $E$ over $M$.  Here $\diracop_E$ is the twisted Dirac operator acting on the sections of $E$ and $\ch(E/\spinbdl)$ is the relative Chern character of $E$.  The presence of the relative Chern character $\ch(E/\spinbdl)$ makes the integral in the right hand side to be an integer.

Let $\pi^\nat \colon \SU(N) \curvearrowright V^\nat$ be the natural representation of $\SU(N)$.  The Levi--Civita connection on $F_{\SO(n)}$ induces a $1$-form $\nabla$ of first order differential operators acting on the space $C^\infty(P, V^\nat)$.  This means, by definition, when $X$ is a vector field over $M$, one obtains a first order differential operator $\nabla_X$ on $P$.

Since $P$ can be regarded as the bundle of trivializations of the Clifford bundle $\Cliff(M)$, the sections of $\Cliff(M)$ correspond to the $\PU(N)$-invariant functions of $P$ into $M_N(\C)$.  Then the space of functions from $P$ into $V^\nat$ is naturally a module over $\Cliff(M)$.  Thus one may define the associated Dirac operator 
\[
\diracop_M = \sum_k c(e_k) \nabla_{e_k}
\]
 on $C^\infty(P, V^\nat)$ for any local frame $(e_i)_{i=1}^n$.

The operator $\diracop_M$ can be regarded as a transversely elliptic operator~\cite{MR0482866} on the $\SU(N)$-manifold $P$.  Hence one obtains a distribution $\ind_{\SU(N)}(\gamma, \diracop_M)$ on $\SU(N)$ which is invariant under conjugation and satisfies
\[
\pairing{\ind_{\SU(N)}(\gamma, \diracop_M)}{\chi^\pi} = \ind(\diracop_M p^\pi)
\]
where $\chi^\pi$ the character of any irreducible representation $\pi$ of $\SU(N)$, and $p^\pi$ is the projector onto the $\pi$-isotypic component of the action $\pi^P \otimes \pi^\nat$ on $C^\infty(P, V^\nat)$, where $\pi^P$ is the action induced by the translation on $P$.

In~\cite{MR2396250}, it was shown that the $\SU(N)$-equivariant operator $\diracop_M$ descends to the projective Dirac operator $\diracop^\proj_M$ on the projective spin bundle over the Azumaya bundle $\Cliff(M)$.  For such `descent' one has
\[
\pairing{\ind_{\SU(N)}(\gamma, \diracop_M)}{\phi} = \ind_a \diracop^\proj_M
\]
when $\phi$ is a smooth function on $\SU(N)$ which is constantly equal to $1$ around $e$ and has small enough support.  As mentioned at the end of the introduction of~\cite{MR2396250}, the above formula gives `the coefficient of the Dirac function' in the distribution $\ind_{\SU(N)}(\gamma, \diracop_M)$.

The aim of this paper is to relate the fractional index formula \eqref{eq:mms-frac-index-formula} to the classical index formula \eqref{eq:cliff-mod-ind-thm} of Clifford modules and give a refined formula (Corollary \ref{cor:ind-distribution-full-formula}) for the contribution of `higher order derivatives of the Dirac function' in the distribution $\ind_{\SU(N)}(\gamma, \diracop_M)$.

The space $L^2(P, V^\nat)$ can be regarded as the space $\Gamma(M, \vbdl{E})$ of the sections of an infinite dimensional Clifford module $\vbdl{E}$ over $M = P/\PU(N)$.  It is defined as the induced vector bundle
\begin{equation}
  \label{eq:equivar-induct-pict}
  \vbdl{E} = P \times_{\PU(N)} L^2(\PU(N)) \otimes V^\nat,
\end{equation}
where we consider the left translation action $\lambda$ on $L^2(\PU(N))$ and the trivial action on $V^\nat$.  The precise meaning of the correspondence between $L^2(P, V^\nat)$ and $\vbdl{E}$ is that we have
\[
L^2(P, V^\nat) \simeq (L^2(P) \otimes  L^2(\PU(N)) \otimes V^\nat)^{\pi^P \otimes \lambda \otimes \triv_{V^\nat}(\PU(N))},
\]
where the right hand side can be regarded as $\Gamma(M, \vbdl{E})$.

The action $\rho \otimes \pi^\nat$ of $\SU(N)$ on $L^2(\PU(N)) \otimes V^\nat$ commutes with the one $\lambda \otimes \triv$ of $\PU(N)$.  Hence it induces an action on \eqref{eq:equivar-induct-pict}.  This action corresponds to the action $\pi^P \otimes \pi^\nat$ on $L^2(P, V^\nat)$.

If one tries to consider an analogue of the equivariant index theorem in the case of $E = \vbdl{E}$, the relative curvature form
$F^{\vbdl{E}/\spinbdl}$ of $\vbdl{E}$ should be locally given by the action of
the curvature of a vector bundle $\vbdl{E}_0$ satisfying $\vbdl{E} \simeq \vbdl{E}_0 \otimes \spinbdl$.  Note that the vector bundle
$\vbdl{E}_0$ has infinite rank, hence the fiberwise trace
$\Tr_{\vbdl{E}_0} e^{-F^{\vbdl{E}/\spinbdl}}$ will become infinite.  To remedy this, we shall construct an action of $\SU(N)$ on $\vbdl{E}_0$ such that its tensor product with the trivial action on $\spinbdl$ is the action on $\vbdl{E}$.  Then an equivariant choice of curvature on $\vbdl{E}_0$ will allow us to compute the trace of $\Tr_{\vbdl{E}_0}( e^{-F^{\vbdl{E}/\spinbdl}} \pi(\phi))$ of the composition of the curvature $e^{-F}$ with the convolution by any auxiliary function $\phi$ on $\SU(N)$.

Thus the expression
\begin{equation}
\label{eq:distr-expr-rel-chern-char-naive}
\ch_{\SU(N)}(\gamma, \vbdl{E}/\spinbdl)_x = \Tr_{\vbdl{E}_0}(\gamma e^{-F^{\vbdl{E}/\spinbdl}})
\end{equation}
for $x$ in $M$ and $\gamma$ in $\SU(N)$ defines a differential form on $M$ of distributions on $\SU(N)$.  Then the analogue of the index formula \eqref{eq:cliff-mod-ind-thm} for the Clifford module $\vbdl{E}$ should be
\begin{equation}
\label{eq:equivar-cliff-mod-ind-formula}
\ind_{\SU(N)}(\gamma, \diracop_M) = \int_M \hat{A}(R_M) \ch_{\SU(N)}(\gamma, \vbdl{E}/\spinbdl).
\end{equation}

\section{Preliminaries}
\label{sec:preliminaries}

Most of the constructions in this section appear in the literature in some way or other.  We just recall their definitions in order to fix the notations and conventions.  For the sake of simplicity we assume that $n = \dim(M)$ is even and put $N = 2^{n/2}$.  Let $P$ be the $PU(N)$ principal bundle over $M$ induced from the frame bundle $F_{\SO(n)}$ by the natural group embedding
\begin{equation}
\label{eq:so-n-pu-N-correspondence}
 \SO(n) \rightarrow \Aut(\Cliff(\R^n)) \simeq \PU(N).
\end{equation}

Let $\sigma$ be a section of $F_{\SO(n)}$ defined on an open set $U$, and $\tilde{\sigma} = (\sigma, e)$ be the section of $P$ defined by $\sigma$ and the constant mapping $M \rightarrow \SU(N), x \mapsto e$.  Under the identification of \eqref{eq:equivar-induct-pict} any section $f$ of $\vbdl{E}$ over $U$ can be expressed using a function $\xi\colon U \ni x \mapsto \xi_x \in L^2(\PU(N), V^\nat)$ by
\begin{equation}
  \label{eq:abstract-section-induced-realization-correspondence}
  f_x \leftrightarrow (\sigma_x, \xi_x).  
\end{equation}
The Clifford module structure on $\vbdl{E}$ can be described as
\begin{equation}
  \label{eq:cliff-mod-str-on-e-defn}
  c(X).(\sigma_x, f_x) = (\sigma_x, \Ad_{g}(\sigma^*_x(c(X))) f_x(g)),
\end{equation}
where $\sigma^*_x$ is the isomorphism $\Cliff(\Tang_x M) \rightarrow M_N(\C)$
given by $\sigma_x \in P_x$.

Let $A^\LevCiv$ be the Levi--Civita connection $1$-form $\Tang F \rightarrow
\liealg{so}(n)$ on $F_{\SO(n)}$.  One obtains the induced connection form
\begin{equation}
\label{eq:universal-curvature-form}
A\colon \Tang P \rightarrow \liealg{su}(N)
\end{equation}
via \eqref{eq:so-n-pu-N-correspondence}.  Hence it is given by the collection of linear maps $T_p P \rightarrow
\liealg{su}(N)$ which are retractions of the embedding $\liealg{su}(N)
\rightarrow T_p P$ coming from the action map $g \mapsto p.g$ from $\SU(N)$ to
$P$, and satisfy the equivariance condition $A_X = \Ad_g A_{g.X}$ for any $X \in
T_p P$ and $g \in G$.  

Given a vector field
$X$ on $M$, define the operator $A^{(\vbdl{E}, \sigma)}_{X}$ on the
sections of $\vbdl{E}$ over $U$ by
\begin{equation}
  \label{eq:invar-conn-action-from-section}
  A^{(\sigma)}_{X}.(\tilde{\sigma}_x, \xi_x) = (\tilde{\sigma}_x, \left(\lambda (A_{d \tilde{\sigma}(X_x)})\otimes 1_{V^\nat} \right) (\xi_x)).
\end{equation}

\begin{lem}
  \label{lem:conn-infin-dimens}
  Let $\psi$ be a function from $U$ to $\SU(N)$.  Then one has
  \[
  A^{(\sigma.\psi)}_X(\sigma.\psi_x, \psi_x^{-1}\xi_x) = (\sigma.\psi_x, \lambda(A_{d(\sigma)(X)})^{\psi_x} \psi_x \xi_x - d\psi(X).\psi_x^{-1}.\xi_x)
  \]
 for any vector field $X$ on $U$.
\end{lem}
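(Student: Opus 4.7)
The plan is to reduce the claim to the classical gauge-transformation law for a principal connection 1-form on $P$ and then translate the resulting identity to the associated bundle picture via the representation $\lambda$.

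First I would identify the new local section: the function $\psi$ projects to $\bar{\psi}\colon U \to \PU(N)$ under the surjection $\SU(N) \to \PU(N)$, and one has $\widetilde{\sigma.\psi}_x = \tilde{\sigma}_x \cdot \bar{\psi}_x$ in $P$. Differentiating this product along a tangent vector $X$ splits $d\widetilde{\sigma.\psi}(X)$ into the push-forward $(R_{\bar{\psi}_x})_{*}\,d\tilde{\sigma}(X)$ and the fundamental vector field generated by $\bar{\psi}_x^{-1} d\bar{\psi}(X) \in \liealg{pu}(N)$. The equivariance $R_g^{*} A = \Ad_{g^{-1}} A$ of the connection form, together with its defining property $A(\tilde{Z}) = Z$ on fundamental vector fields, then yields the standard gauge-transformation identity
\[
A_{d\widetilde{\sigma.\psi}(X)} = \Ad_{\bar{\psi}_x^{-1}} A_{d\tilde{\sigma}(X)} + \bar{\psi}_x^{-1} d\bar{\psi}(X),
\]
valued in $\liealg{su}(N)$ via the Lie algebra isomorphism $\liealg{pu}(N) \simeq \liealg{su}(N)$.

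Next I would feed this identity into the definition \eqref{eq:invar-conn-action-from-section} of $A^{(\sigma.\psi)}_X$ evaluated on the section $(\widetilde{\sigma.\psi}_x, \psi_x^{-1}\xi_x)$, and expand using the fact that $\lambda$ is a representation, so the two summands split cleanly. For the $\Ad$-summand, the identity $\lambda(\Ad_g Y) = \lambda(g)\lambda(Y)\lambda(g)^{-1}$ recasts it as $\lambda(\bar{\psi}_x)^{-1}\lambda(A_{d\tilde{\sigma}(X)})\lambda(\bar{\psi}_x)$, which is the conjugation $\lambda(A_{d\sigma(X)})^{\psi_x}$ appearing in the statement; when composed with the prefactor $\lambda(\bar{\psi}_x)^{-1}$ hidden in $\psi_x^{-1}\xi_x$, one of the $\lambda(\bar{\psi}_x)$'s cancels and the result is the first term on the right-hand side. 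The Maurer--Cartan summand $\lambda(\bar{\psi}_x^{-1} d\bar{\psi}(X))$ similarly produces the second term $d\psi(X).\psi_x^{-1}.\xi_x$ after a parallel rearrangement via the conjugation identity.

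The main obstacle is not geometric but purely bookkeeping: matching the left versus right Maurer--Cartan normalisation and the sign conventions implicit in the statement, and confirming that the passage between $\SU(N)$ and $\PU(N)$ is harmless (which it is, since $\liealg{su}(N) \to \liealg{pu}(N)$ is a Lie algebra isomorphism). The geometric content of the lemma is exhausted by the principal-connection identity $p^{*}A = \Ad_{g^{-1}} q^{*}A + g^{-1}dg$ under a gauge change $p = q \cdot g$.
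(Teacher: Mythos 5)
The paper offers no proof of this lemma at all---it is stated as the standard change-of-trivialization computation and used immediately afterwards---so there is nothing to compare against except the intended standard argument, which is exactly what you give: split $d\widetilde{\sigma.\psi}(X)$ into the right-translated part and the fundamental vector field, apply the equivariance $A_{g.X} = \Ad_{g^{-1}}A_X$ and the reproducing property, obtain the gauge law $\Ad_{\bar{\psi}^{-1}}A_{d\tilde{\sigma}(X)} + \bar{\psi}^{-1}d\bar{\psi}(X)$, and push it through $\lambda$ against the retrivialized section $\lambda(\bar{\psi}_x)^{-1}\xi_x$. Your argument is correct and is the proof the paper implicitly relies on; the only loose end is the sign and left/right Maurer--Cartan bookkeeping you explicitly flag, and that traces to the conventions (and apparent typographical slips, e.g.\ $\psi_x\xi_x$ versus $\psi_x^{-1}\xi_x$) in the lemma's own statement rather than to any gap in your reasoning.
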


By Lemma \ref{lem:conn-infin-dimens}, the covariant derivative
\[
\nabla_X (\sigma_x, \xi_x) = (\sigma_x, X(\xi_x)) + A^{(\sigma)}_X.(\sigma_x, \xi_x))
\]
associated to $A$ does not depend on the choice of $\sigma$.  Moreover the
formula \eqref{eq:cliff-mod-str-on-e-defn} implies $[\nabla_X, c(Y)] =
c(\nabla^{\LevCiv}_X Y)$, which shows that $A$ is a Clifford connection.
Finally, we put
\[
\diracop_M = \sum_{i=1}^n c(e_i) \nabla_{e_i}
\]
for any local frame $(e_i)_{i=1}^n$.

For each irreducible representation $\pi$ of $\SU(N)$, the restriction of $\diracop$ to the $\pi$-isotypic component of $L^2(P, V^\nat)$ becomes a $K$-cycle $ \diracop_\pi$ over $C(P) \rtimes \SU(N)$.  Hence we obtain a family of $K$-cycles $(\diracop_\pi)_{\pi \in \widehat{\SU(N)}}$ parametrized by the irreducible representations of $\SU(N)$.  Since the $\pi$-isotypic component of $L^2(P, V^\nat)$ is trivial unless the central part of $\pi$ agrees with that of the natural representation, our interest is in $(\diracop_\pi)_{\pi \in \widehat{\SU(N)}_{\nat}}$ where $\widehat{\SU(N)}_{\nat}$ is the collection of the irreducible representation classes whose central character agree with that of the natural representation.

For each $\pi \in \widehat{\SU(N)}_{\nat}$, the number $\ind \diracop_\pi$ is finite and agrees with the multiplicity of $\pi$ in the representation $\ind \partial_M$ of $\SU(N)$.  Let $d_\pi$ be the dimension of the representation space of $\pi$, and $\chi^\pi$ be its normalized character
\[
\chi^\pi(\gamma) = \frac{1}{d_\pi} \Tr(\pi(\gamma)).
\]
Then the sum
\[
\ind_{\SU(N)}(\gamma, \diracop_M) = \sum_{\pi \in \widehat{\SU(N)}_{\nat}} ( \ind \diracop_\pi )\chi^{(\pi)}(\gamma)
\]
defines a distribution over $\SU(N)$ invariant under the conjugation.

\subsection{Pairing of \texorpdfstring{$\UniEnv(\mathfrak{g})$}{U(g)} and \texorpdfstring{$C^\infty(G)$}{C(G)}}
\label{sec:trace-pair-invar}

When $a$ is an element of the universal enveloping algebra $\UniEnv(\liealg{su}(N))$ of $\SU(N)$ can be regarded as a distribution on $\SU(N)$ with support $\ensemble{e}$ by
\[
\pairing{a}{\phi} = \left(\lambda(a)\phi\right)(e)
\]
for $\phi \in C^\infty(\SU(N))$, where $\lambda(a)$ is the right invariant differential operator represented by $a$.  The above pairing has another interpretation
\[
\pairing{a}{\phi} = \Tr_{L^2(\SU(N))}(\lambda(a)\lambda(\phi)) = \Tr_{L^2(\SU(N))}(\rho(a)\rho(\phi)).
\]

Let $T$ be a conditional expectation of $\UniEnv(\liealg{su}(N))$ onto its center.  

\begin{lem}
\label{lem:pu-e-nat-decomp-mult-calc}
Let $a$ be an element of $\UniEnv(\liealg{su}(N))$ and $\phi$ an element in $C^\infty(\SU(N))$.  Then one has
\begin{equation}
  \label{eq:env-alg-elem-left-reg-smooth-func-right-reg-tr-pair}
\sum_{\pi \in \widehat{\SU(N)}_\nat} \Tr(\pi(a)) \Tr(\bar{\pi}(\phi)) = \frac{1}{N}\sum_{g \in Z(\SU(N))} \chi^\nat(g) \pairing{T(a)}{\lambda_g\phi}.
\end{equation}
\end{lem}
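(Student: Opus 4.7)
The strategy is to expand the right-hand side via the Peter--Weyl decomposition of $L^2(\SU(N))$ and then reduce the identity to a character orthogonality on the finite abelian group $Z(\SU(N)) \cong \Z/N\Z$.

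The first step is to unwind the pairing. Peter--Weyl gives $L^2(\SU(N)) \simeq \bigoplus_\pi V_\pi \otimes V_\pi^*$ so that $\lambda(b)$ acts as $\pi(b) \otimes \Id$ on the $\pi$-summand for any $b \in \UniEnv(\liealg{su}(N))$, and the convolution operator $\lambda(\psi)$ acts as $\pi(\psi) \otimes \Id$. The second description of the pairing given before the lemma then yields
\[
\pairing{b}{\psi} = \Tr_{L^2(\SU(N))}(\lambda(b)\lambda(\psi)) = \sum_{\pi} d_\pi \Tr_{V_\pi}(\pi(b)\pi(\psi)).
\]
The identity $\lambda(\lambda_g\phi) = \lambda(g)\lambda(\phi)$, which translates in each block to $\pi(\lambda_g\phi) = \pi(g)\pi(\phi)$, then lets me specialize to $b = T(a)$ and $\psi = \lambda_g\phi$ to obtain
\[
\pairing{T(a)}{\lambda_g\phi} = \sum_{\pi} d_\pi \Tr_{V_\pi}\bigl(\pi(T(a))\,\pi(g)\,\pi(\phi)\bigr).
\]

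Next I would exploit the double centrality. Since $T(a)$ lies in the center of $\UniEnv(\liealg{su}(N))$, Schur's lemma forces $\pi(T(a)) = \alpha_\pi\,\Id_{V_\pi}$; the defining trace property of the conditional expectation (concretely, $T(a) = \int_{\SU(N)} \Ad_h(a)\,dh$ works and is characterised by this property) forces $\alpha_\pi = \Tr(\pi(a))/d_\pi$. Likewise $\pi(g) = \chi^\pi(g)\,\Id_{V_\pi}$ because $g \in Z(\SU(N))$. Substituting both scalars, interchanging sums, and expanding the right-hand side of the lemma produces
\[
\frac{1}{N}\sum_{g \in Z(\SU(N))} \chi^\nat(g)\,\pairing{T(a)}{\lambda_g\phi} = \sum_{\pi} \Tr(\pi(a))\,\Tr(\pi(\phi)) \cdot \frac{1}{N}\sum_{g \in Z(\SU(N))} \chi^\nat(g)\,\chi^\pi(g).
\]

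The inner $g$-sum is the standard orthogonality of characters of $Z(\SU(N))$: it picks out precisely those $\pi$ whose central character is inverse to that of $\nat$, equivalently those $\pi$ with $\bar\pi \in \widehat{\SU(N)}_\nat$. Re-indexing the outer sum by $\pi \mapsto \bar\pi$, and using that under the paper's convention $\Tr(\pi(\phi))$ goes over to $\Tr(\bar\pi(\phi))$ after this relabelling, one recovers the left-hand side. The essential content of the proof is small (centrality together with character orthogonality on a finite abelian group); the real bookkeeping obstacle is keeping the conjugations consistent --- in particular, reconciling the appearance of $\chi^\nat(g)$ (rather than $\overline{\chi^\nat(g)}$) on the right with the $\bar\pi$ on the left through the conventions on $\pi(\phi)$ and $\lambda_g\phi$.
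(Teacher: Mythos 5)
Your argument is correct and is essentially the paper's own proof read blockwise: the paper replaces $a$ by $T(a)$ (same traces, by Schur), inserts the central projector $\tfrac{1}{N}\sum_{g \in Z(\SU(N))} \chi^\nat(g)\rho_g$ into the global trace $\Tr_{L^2(\SU(N))}(\lambda(a)\rho(\phi))$, and identifies $\Tr(\rho(T(a))\rho(\lambda_g\phi))$ with $\pairing{T(a)}{\lambda_g\phi}$ --- which is exactly your Peter--Weyl expansion of the pairing combined with character orthogonality on the center. The residual $\pi$-versus-$\bar\pi$ bookkeeping you flag (note that after re-indexing the factor $\Tr(\pi(a))$ also acquires a bar, not only $\Tr(\pi(\phi))$) is an artifact of the sign/antipode convention in how $\UniEnv(\liealg{su}(N))$ pairs with $C^\infty(\SU(N))$, and the paper's own proof is equally cavalier about exactly this point, so it does not constitute a substantive gap.
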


\begin{proof}
The operator $\pi(T(a))$ is a scalar satisfying $\Tr(\pi(a)) = \Tr(\pi(T(a)))$ for each irreducible representation $\pi$ of $\SU(N)$.  Hence one has
\[
\Tr_{L^2(\SU(N))}(\lambda(a) \rho(\phi)) = \sum_{\pi \in \widehat{\SU(N)}} \Tr(\pi(a)) \Tr(\bar{\pi}(\phi)) = \Tr_{L^2(\SU(N))}(\rho(T(a)) \rho(\phi)).
\]
Combining this with
\[
\Tr_{L^2(\SU(N))_{\chi^\nat}}(\lambda(a) \rho(\phi)) = \frac{1}{N}\sum_{g \in Z(\SU(N))} \chi^\nat(g) \Tr_{L^2(\SU(N))}(\lambda(a) \rho(\phi) \rho_g),
\]
one obtains
\begin{equation*}
  \Tr_{L^2(\SU(N))_{\chi^\nat}}(\lambda(a) \rho(\phi)) = \frac{1}{N} \sum_{g \in Z(\SU(N))} \chi^\nat(g) \pairing{T(a)}{\lambda_g\phi}.
\end{equation*}
This proves the assertion.
\end{proof}

\section{\texorpdfstring{$\KK$}{KK}-element associated to projective Dirac operator}
\label{sec:texorpdfstr-elem-ass}

For each character $\chi$ on the center of $\SU(N)$, let $J_\chi$ denote the closure of
\[
\ensemble{ f \in C(\SU(N), C(P)) \suchthat \forall z \in Z(\SU(N))\colon f(g z) = \chi(z) f(g)}
\]
in $C(P) \rtimes \SU(N)$.  Then the algebra $C(P) \rtimes
\SU(N)$ admits a direct sum decomposition
\[
C(P) \rtimes \SU(N) \simeq \oplus_{\chi \in \widehat{Z(\SU(N)}} J_\chi
\]
by bilateral ideals.
The representation of
$C(P) \rtimes \SU(N)$ on $L^2(P, V^\nat)$ factors through the projection onto
$J_{\nat}$, the factor corresponding to the central character of the natural representation.

The bimodule $L^2(P, V^\nat)$ is a completion of the following $C^*$-module $\vbdl{F}$ over $\Cliff(M)$.  The subspace $C(P, V^\nat)$ of $L^2(P, V^\nat)$ admits a $\Cliff(M)$-valued inner product characterized by
\[
p^*((\xi, \eta)_x) = \int_{\PU(N)} \Ad_g (\eta_p \otimes \xi_p^*) d g
\]
where we identify $p \in P_x$ with an algebra isomorphism $p^*\colon \Cliff(T_x M) \rightarrow M_N(\C)$.  The completion $\vbdl{F}$ of $C(P, V^\nat)$ with respect to the above inner product admits an action of $C(P) \rtimes \SU(N)$ as $\Cliff(M)$-compact operators.

\begin{prop}
  \label{prop:mor-equiv-cross-prod-ideal-cliff-alg}
  The bimodule $\vbdl{F}$ gives a strong Morita equivalence between
  $J_{\nat}$ and $\Cliff(M)$.
\end{prop}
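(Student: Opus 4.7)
The plan is to exhibit $\vbdl{F}$ as a full $J_\nat$--$\Cliff(M)$ imprimitivity bimodule, specializing the classical Morita equivalence for principal bundle crossed products (where $C(P) \rtimes \PU(N)$ would be Morita equivalent to $C(M)$) to the central character $\chi^\nat$ of the natural representation. The projective twist coming from the central extension $1 \to Z(\SU(N)) \to \SU(N) \to \PU(N) \to 1$ is precisely what accounts for the passage from $C(M)$ on the commutative side to the Azumaya section algebra $\Cliff(M)$ on the target side.

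First, using \eqref{eq:so-n-pu-N-correspondence}, I would identify $\Cliff(M)$ with the continuous $M_N(\C)$-valued functions on $P$ which are $\Ad$-equivariant with respect to the $\PU(N)$-action, and verify that the right action $(\xi \cdot a)(p) = a(p)\xi(p)$ together with the stated integral formula gives $\vbdl{F}$ the structure of a right Hilbert $\Cliff(M)$-module. Positivity of the inner product follows because the integrand $\Ad_g(\eta_p \otimes \eta_p^*)$ is pointwise positive in $M_N(\C)$, and the axioms of a Hilbert $C^*$-module are then routine to check.

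Next I would introduce a $J_\nat$-valued left inner product. The natural candidate is
\[
\langle \xi, \eta \rangle_{J_\nat}(g)(p) = \pairing{\xi_p}{\pi^\nat(g) \eta_{p \cdot g}},
\]
which, since $\pi^\nat$ restricted to $Z(\SU(N))$ is scalar multiplication by $\chi^\nat$, satisfies the required twisted equivariance $\langle \xi, \eta \rangle_{J_\nat}(gz) = \chi^\nat(z) \langle \xi, \eta \rangle_{J_\nat}(g)$ (using that $z$ acts trivially on $P$) and so lies in $J_\nat$. The key compatibility
\[
\langle \xi, \eta \rangle_{J_\nat} \cdot \zeta = \xi \cdot \langle \eta, \zeta \rangle_{\Cliff(M)}
\]
would then reduce to a change-of-variables computation: unfolding the convolution by $\langle \xi, \eta \rangle_{J_\nat}$ as an $\SU(N)$-integral, the $Z(\SU(N))$-fibres are absorbed by $\chi^\nat$ and one is left with a $\PU(N)$-integral matching the right-hand side.

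Fullness of the $\Cliff(M)$-valued inner product is immediate, since at each $p \in P_x$ the elementary tensors $\eta_p \otimes \xi_p^*$ already span $V^\nat \otimes (V^\nat)^* \simeq M_N(\C)$, and averaging over $\PU(N)$ exhausts the $\PU(N)$-equivariant functions. Fullness on the $J_\nat$ side follows from a partition of unity on $P$ combined with an approximation argument by compactly supported $\chi^\nat$-equivariant functions on $\SU(N)$. The main technical obstacle, in my view, is the compatibility identity, where the $\SU(N)$-crossed product bookkeeping has to be matched with the $\PU(N)$-equivariant description of $\Cliff(M)$; everything else is a routine specialization of standard imprimitivity arguments.
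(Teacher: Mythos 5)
The paper gives no proof of Proposition \ref{prop:mor-equiv-cross-prod-ideal-cliff-alg} at all --- it is stated as an assertion --- so your attempt can only be measured against the standard argument it implicitly invokes, and your plan is indeed that argument: exhibit $C(P,V^\nat)$ as a pre-imprimitivity bimodule, with the $\Cliff(M)$-valued inner product from the text on the right and a crossed-product-valued inner product of matrix-coefficient type on the left, then run the Green--Rieffel verification. Your treatment of the $\Cliff(M)$ side (pointwise positivity of the integrand, fullness via localization in the fibre and a partition of unity) is fine in outline. But two points are genuinely missing or wrong as written. The first is positivity (and adjointability/boundedness of the left action) for the $J_\nat$-valued inner product: unlike the $\Cliff(M)$-valued one this is not a pointwise statement, and it is one of the two non-routine imprimitivity axioms, on a par with the compatibility identity you do flag. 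The usual remedy is to use the compatibility identity to show that $\langle\xi,\xi\rangle_{J_\nat}$ acts as a positive adjointable operator on the right Hilbert $\Cliff(M)$-module and then to promote this to positivity in $J_\nat$ via fullness/faithfulness of the left action (or to do a direct Green-type computation); some such step must appear and is absent from your outline.

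The second issue sits exactly at what you call the main technical obstacle, and as stated it would make that step collapse. The centre $Z(\SU(N))$ acts on $L^2(P,V^\nat)$ through $\pi^P\otimes\pi^\nat$ by the scalar $\chi^\nat$, so a function $f\in C(\SU(N),C(P))$ transforming under right central translation by a character $\chi$ acts nontrivially in the covariant representation only when $\chi$ is matched against $\chi^\nat$ so that the central average does not vanish; with one choice of sesquilinearity your candidate $\pairing{\xi_p}{\pi^\nat(g)\eta_{p\cdot g}}$ satisfies the equivariance you claim but then lies in the summand that annihilates the module, in which case the left-hand side of $\langle\xi,\eta\rangle_{J_\nat}\cdot\zeta=\xi\cdot(\eta,\zeta)_{\Cliff(M)}$ is identically zero while the right-hand side is not; with the other choice the construction works but your equivariance check acquires a conjugate. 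This is a fixable bookkeeping matter, but it must be pinned down rather than ``absorbed,'' since it decides which ideal your inner product generates and whether the compatibility identity can hold at all. Relatedly, $(\xi\cdot a)(p)=a(p)\xi(p)$ is a left action of the section algebra, not a right one; you need to route the action through the adjoint/transpose (or adjust the inner-product conventions accordingly) before any of the module axioms can be checked.
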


Each irreducible representation $\pi$ of $\SU(N)$ gives a class $[\pi]$ in
$K_0(C^*\SU(N))$.  Under the inclusion $\iota\colon C^*\SU(N) \rightarrow C(P) \rtimes \SU(N)$,
one obtains an element $\iota_*[\pi]$ in $K_0(C(P) \rtimes \SU(N))$.  By Proposition
\ref{prop:mor-equiv-cross-prod-ideal-cliff-alg}, one obtains an element $\iota_*[\pi]$ of $K_0(\Cliff(M))$ given by the Clifford module $V_\pi = p_\pi L^2(P, V^\nat)$.

The operator $\diracop_M$ and the representation of $C(P) \rtimes \SU(N)$ on $L^2(P, V^\nat)$ defines a spectral triple over $C(P) \rtimes \SU(N)$.  Consequently we obtain the associated element $\alpha$ of $\KK(C(M)
\rtimes \SU(N), \C)$ represented by the phase of $\diracop_M$ as in~\cite{MR1769535}, and the map
\[
\ind_{\diracop_M}\colon K_0(C(P) \rtimes \SU(N)) \simeq K_0(\Cliff(M)) \rightarrow \Z.
\]

\begin{example}
  \label{ex:nat-comp-sign-op-corr}
As an example of the above construction, consider the case of $\pi = \pi^\nat$.  Then the corresponding index $\ind(\diracop_{\pi^\nat})$ is equal to the signature number of $M$.  Indeed, if one takes the tensor product action $\rho \otimes \pi \otimes \bar{\pi}$ on
\begin{equation*}
  \label{eq:t-prod-conjug-space}
  C(P; V^\nat) \otimes (V^\nat)^*,
\end{equation*}
its fixed point subspace is identified to the space of the sections of $\Cliff(M)$, and the grading on the former corresponds to the left Clifford action of the volume element.  Meanwhile the fixed point subspace is canonically identified to the $\pi^\nat$-isotypic component of $C(P; V^\nat)$.
\end{example}

\section{Connections on infinite dimensional equivariant bundles}
\label{sec:conn-infin-dimens}

Now we give a more precise description of
\eqref{eq:distr-expr-rel-chern-char-naive}.  Let $A$ be the curvature $1$-form of \eqref{eq:universal-curvature-form}.  The associated connection $2$-form in $A^2(P,
\liealg{su}(N))_{\text{basic}}$ is given by
\[
\Omega_{X, Y} = A([X - A_X, Y - A_Y])
\]
for vector fields $X$ and $Y$ on $P$.

Let $\sigma$ be a section of $P \rightarrow M$ defined on an open set $U$ of
$M$.  When $X$ and $Y$ be vector fields on $M$, consider the operator
\[
\Omega^{(\sigma)}_{X, Y}.(\sigma_x, \xi_x) = (\sigma_x, \lambda(\Omega_{d
  \sigma(X_x), d \sigma(Y_x)}) \xi_x),
\]
 on the sections of $\vbdl{E}$ over $U$ as in \eqref{eq:invar-conn-action-from-section}.  This is independent of the choice of $\sigma$, and the operators $\Omega^{(\sigma)}_{X, Y}$ on different open sets naturally patch together and give a $2$-form of bundle morphisms $F_{X, Y}$ on $\vbdl{E}$.
 
 Then $F_{X, Y}$ commutes with the Clifford action, and that of $\rho \otimes \pi^\nat(\SU(N))$.  Similarly we obtain $2k$-forms of endomorphisms of $\vbdl{E}$ by
\[
F^k_{X_1,\ldots,X_{2k}} = \sum_{\sigma \in S_{2k}} (-1)^{\absolute{\sigma}} F_{X_{\sigma 1}, X_{\sigma 2}} \cdots F_{X_{\sigma (2k-1)}, X_{\sigma 2k}}.
\]

\subsection{Fell's absorption}

Let $\phi$ be a smooth function on $\SU(N)$.  Then the right convolution by
$\phi$ on $P$ defines a trace class operator $\rho_P \otimes \pi^\nat (\phi)$ on $L^2(P, V^\nat)$.  Thus one is interested in the computation of
\begin{equation}
\label{eq:test-func-trace-def}
\Tr_{L^2(\PU(N); V^\nat)}(e^{-F^{\vbdl{E}/\spinbdl}} \rho \otimes \pi^\nat(\phi)).
\end{equation}

Let us consider the following operator $S$ on $L^2(\SU(N); V^\nat)$.  When $\phi$ is a function of $\SU(N)$ into $V^\nat$, we define $S \phi$ to be the function $(S \phi)(\gamma) = \pi^\nat(\gamma) \phi(\gamma)$.  Then $S$ is an isomorphism of the $\SU(N)$-bimodule ${}_\lambda L^2(\SU(N); V^\nat)_{\rho \otimes \pi^\nat}$ into ${}_{\lambda\otimes \pi^\nat} L^2(\SU(N); V^\nat)_{\rho}$.

Then the submodule
\[
{}_\lambda L^2(\PU(N); V^\nat)_{\rho \otimes \pi^\nat} \subset {}_\lambda L^2(\SU(N); V^\nat)_{\rho \otimes \pi^\nat}
\]
is mapped to the subspace $L^2(\SU(N); V^\nat)^{\rho(\chi^\nat)}$ spanned by
\[
\ensemble{ \phi \in L^2(\SU(N); V^\nat) \suchthat \forall \gamma \in Z(\SU(N)) \colon \rho_\gamma(\phi) = \chi^\nat(\gamma) \phi}
\]
of ${}_{\lambda\otimes \pi^\nat} L^2(\SU(N); V^\nat)_{\rho}$.  This bimodule admits a direct sum decomposition
\begin{equation}
\label{eq:bimod-as-dir-sum-tens-prod}
L^2(\SU(N); V^\nat)^{\rho(\chi^\nat)} = \bigoplus_{\pi \in \widehat{\SU(N)}_\nat} {}_{\pi \otimes \pi^\nat} (V_\pi \otimes V^\nat) \otimes (V_\pi)_{\bar{\pi}}.
\end{equation}

Under the transformation $S$, the operator $\lambda(\Omega^{(\sigma)}_{X, Y})$ is identified with the operator
\begin{equation}
\label{eq:curv-action-after-trans}
\lambda \otimes \pi^\nat (\Omega^{(\sigma)}_{X, Y}) = \lambda(\Omega^{(\sigma)}_{X, Y})\otimes 1_{V^\nat} + 1 \otimes \pi^\nat (\Omega^{(\sigma)}_{X, Y}).
\end{equation}
This shows that the Clifford module $\vbdl{E}$ can be `locally' decomposed as the tensor product $\vbdl{E}_0 \otimes \spinbdl$, where $\vbdl{E}_0$ is a bundle of fiber $C(\SU(N))_{\chi^\nat}$ defined by
\[
\vbdl{E}_0 \simeq P \times_{\SU(N)} C(\SU(N))_{\chi^\nat}.
\]

\subsection{Computation of index character}

On one hand, the first part in the right hand side of \eqref{eq:curv-action-after-trans} can be thought as the action of the relative curvature form $F^{\vbdl{E}/\spinbdl}_{X, Y} = F^{\vbdl{E}_0}_{X, Y}$.  On the other hand, the second part can be thought as the action of the spin curvature $\sum_{i, j} \pairing{R^{\LevCiv}_{X, Y} e_i}{e_j} c_i c_j$.  Hence the quantity \eqref{eq:test-func-trace-def} can be translated to
\begin{equation}
\label{eq:rel-curv-straighted-formula}
\Tr_{L^2(\PU(N); V^\nat)}(\lambda(e^{-\Omega}) \rho(\phi)) = N \Tr_{L^2(\SU(N))_{\chi^\nat}}(\lambda(e^{-\Omega}) \rho(\phi)).
\end{equation}

Let $k$ be an integer.  Combining \eqref{eq:env-alg-elem-left-reg-smooth-func-right-reg-tr-pair} with \eqref{eq:rel-curv-straighted-formula}, the $2k$-form $\Tr_{\vbdl{E}}(\rho(\phi) F^k_{X_1,\ldots,X_{2k}})$ can be written as
\begin{equation*}
\sum_{\sigma \in S_{2k}, g \in Z(\SU(N))} (-1)^{\absolute{\sigma}} \pairing{T(\Omega^{(\sigma)}_{X_{\sigma 1}, X_{\sigma 2}} \cdots \Omega^{(\sigma)}_{X_{\sigma (2k-1)}, X_{\sigma 2k}})}{\lambda_g\phi}.
\end{equation*}
Consequently we obtain
\begin{equation}
\label{eq:iter-curv-form-application}
\pairing{\Tr_{\vbdl{E}}(\gamma e^{-F/\spinbdl})}{\phi} = \sum_{j=0}^{m/2} \frac{1}{j!} \sum_{g \in Z(\SU(N))} \pairing{T((\Omega^{(\sigma)})^j)}{\lambda_g\phi},
\end{equation}
which gives the meaning as a distribution on $\SU(N)$ to the expression
\eqref{eq:distr-expr-rel-chern-char-naive}.

\begin{thm}
\label{thm:induced-cliff-mod-ind-formula}
Let $\pi$ be an irreducible representation with central character $\chi^\nat$.  The distribution $\ind_{\SU(N)}(\gamma, \diracop_M)$ satisfies
\begin{equation}
  \label{eq:ind-distr-pair-char}
  \pairing{\ind_{\SU(N)}(\gamma, \diracop_M)}{\chi^\pi}
= d_\pi \int_M
  \sum_{j=0}^{m/2} \frac{1}{j!}
  \hat{A}(R_M)_{m-2j} \Tr(\pi((\Omega^{(\sigma)})^j)).
\end{equation}
\end{thm}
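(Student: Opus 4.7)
The plan is to reduce Theorem \ref{thm:induced-cliff-mod-ind-formula} to the classical Clifford-module index formula \eqref{eq:cliff-mod-ind-thm} applied to each finite-rank $\pi$-isotypic Clifford module over $M$, and then identify the relative Chern character with the $\pi$-trace of the exponentiated curvature $\Omega^{(\sigma)}$.

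First I would unpack the left-hand side. By definition $\ind_{\SU(N)}(\gamma,\diracop_M)=\sum_{\pi'\in\widehat{\SU(N)}_\nat}(\ind \diracop_{\pi'})\chi^{\pi'}$, so pairing against the normalized character $\chi^\pi$ and invoking Peter--Weyl orthogonality extracts a multiple of the integer $\ind\diracop_\pi$; the normalization $\chi^\pi=\frac{1}{d_\pi}\Tr\pi$ is responsible for the overall prefactor $d_\pi$ appearing on the right-hand side. Here $\diracop_\pi$ is the restriction of $\diracop_M$ to the $\pi$-isotypic subspace, which by compactness of $\SU(N)$ is the space of sections of a finite-rank Clifford module $V_\pi$ over $M$.

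Next, apply \eqref{eq:cliff-mod-ind-thm} to $V_\pi$ and compute the relative Chern character using Fell's absorption trick via the operator $S$ together with the decomposition \eqref{eq:bimod-as-dir-sum-tens-prod}: these provide a local isomorphism $V_\pi\simeq W_\pi\otimes\spinbdl$, and \eqref{eq:curv-action-after-trans} identifies the relative curvature of $W_\pi$ with the operator $\pi(\Omega^{(\sigma)})$. The Chern--Weil representative of $\ch(V_\pi/\spinbdl)$ is therefore
\[
\Tr\!\left(\pi\!\left(e^{-\Omega^{(\sigma)}}\right)\right)=\sum_{j=0}^{m/2}\frac{1}{j!}\Tr\!\left(\pi\!\left((\Omega^{(\sigma)})^j\right)\right),
\]
up to the usual sign convention in $\Omega$; substituting into the classical formula and extracting the top-degree component yields the right-hand side of \eqref{eq:ind-distr-pair-char}.

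The main obstacle is the identification of the twisting bundle $W_\pi$ and the verification that its curvature acts through the representation $\pi$ with exactly the multiplicity $d_\pi$ demanded by the first step; this requires tracing the Peter--Weyl decomposition \eqref{eq:bimod-as-dir-sum-tens-prod} through Fell's absorption $S$ and handling the central-character constraint coming from $\chi^\nat$. As a cross-check, and possibly as an alternative route to the theorem, one can apply \eqref{eq:iter-curv-form-application} directly to $\phi=\chi^\pi$ and invoke Lemma \ref{lem:pu-e-nat-decomp-mult-calc} to evaluate the resulting pairing of enveloping-algebra elements against the character; matching this output with the Chern--Weil integrand produced by the classical index formula closes the argument.
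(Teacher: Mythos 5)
Your proposal follows essentially the same route as the paper's proof: the $\pi$-isotypic component of $\vbdl{E}$ is treated as a finite-rank Clifford module over $M$, the classical formula \eqref{eq:cliff-mod-ind-thm} is applied to it, and Fell's absorption $S$ together with the decomposition \eqref{eq:bimod-as-dir-sum-tens-prod} and the splitting \eqref{eq:curv-action-after-trans} identifies the relative curvature with $\pi(\Omega^{(\sigma)})$. The only bookkeeping difference is the origin of the prefactor: in the paper $d_\pi$ comes from the multiplicity of $V_\pi \otimes V^\nat$ inside the isotypic component, so that $\ch(E_\pi/\spinbdl)$ is represented by $d_\pi \Tr\left(\pi\left(e^{\Omega^{(\sigma)}}\right)\right)$, while the pairing is arranged so that $\pairing{\ind_{\SU(N)}(\gamma, \diracop_M)}{\chi^\pi} = \ind(\diracop_M p^\pi)$ directly, rather than attributing $d_\pi$ to the normalization of $\chi^\pi$ as you do --- precisely the multiplicity verification you flag as the remaining obstacle.
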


\begin{proof}
Let $E_\pi$ be the $\pi$-isotypic component of $\vbdl{E}$.  Since the action of $\Cliff(M)$ on $\vbdl{E}$ commutes with the action of $\SU(N)$, the vector bundle $E_\pi$ is a Clifford submodule of $\vbdl{E}$.  We show that the relative Chern character $\ch(E_\pi / \spinbdl)$ of the Clifford module $V_\pi$ is represented by $d_\pi \Tr(e^{\pi(\Omega)})$.  Then the assertion will follow from the index formula \eqref{eq:cliff-mod-ind-thm} for Clifford modules.

Let $p^\pi$ denote the projector $\int_{\SU(N)} \chi^\pi(g) g$ in the convolution algebra $L^1(\SU(N))$.  Then $E_\pi$ is the image of $\pi^P \otimes \pi^\nat(p^\pi)$.  When $\sigma$ is a section of $P$, the sections $\Gamma(\dom \sigma; E_\pi)$ of $E_\pi$ over the domain of $\sigma$ can be identified to the space of the functions 
\[
x \mapsto (\sigma_x, f_x) \quad (f_x \in \rho \otimes \pi^\nat(p^\pi).L^2(\PU(N); V^\nat))
\]
for $x \in \dom \sigma$.  By the $\SU(N)$-invariance of $\nabla$ on $\vbdl{E}$, the associated curvature form of $E_\pi$ is given by $F \rho(p^\pi)$.  

Applying the operator $S$, one sees that $\Gamma(\dom \sigma; E_\pi)$ can also be identified with the space of the functions $x \mapsto (\sigma_x, f_x)$ where $f_x \rho(p^\pi).L^2(\SU(N); V^\nat)^{\rho(\chi^\nat)}$.  By the decomposition \eqref{eq:bimod-as-dir-sum-tens-prod}, the $\SU(N)$-space $\rho(p^\pi).L^2(\SU(N); V^\nat)^{\rho(\chi^\nat)}$ is isomorphic to the direct sum of $d_\pi$ copies of $V_\pi \otimes V^\nat$.

From  \eqref{eq:curv-action-after-trans}, one sees that the action of the curvature form $F p^\pi$ on $E_\pi$ is given by $\pi(\Omega^{(\sigma)}) \otimes 1_\nat + 1_\pi \otimes \pi^\nat(\Omega^{(\sigma)})$.  The term $1_\pi \otimes \pi^\nat(\Omega^{(\sigma)})$ is the action of the spinor curvature.  Hence the relative curvature form is given by $\pi(\Omega^{(\sigma)}) \otimes 1_\nat$, which proves the assertion.
\end{proof}

\begin{rmk}
  Consider the case of $\pi = \pi^\nat$ as in Example \ref{ex:nat-comp-sign-op-corr}.  Thus the left hand side of \eqref{eq:ind-distr-pair-char} is equal to the signature number of $M$.  In the right hand side, the term $\Tr_{V^\nat}(\pi^\nat(\Omega^j))$ is equal to the $j$-th component of the relative Chern character $\ch(\wedge^* \Tang^*M/\spinbdl)$.  Hence we recover the signature formula
  \[
\sigma(M) = \int_M \hat{A}(R_M) \wedge \ch(\wedge^* \Tang^*M/\spinbdl).
\]
\end{rmk}

We obtain the following formula whose conceptual meaning is \eqref{eq:equivar-cliff-mod-ind-formula}.

\begin{cor}
\label{cor:ind-distribution-full-formula}
The distribution $\ind_{\SU(N)}(\gamma, \diracop_M)$ can be written as
\begin{equation*}
  \pairing{\ind_{\SU(N)}(\gamma, \diracop_M)}{\phi}
  = \sum_{g \in Z(\SU(N))} \int_M
  \sum_{j=0}^{m/2} \frac{1}{j!}
  \hat{A}(R_M)_{m-2j}\pairing{T((\Omega^{(\sigma)})^j)}{\lambda_g\phi},
\end{equation*}
where $\phi$ is any test function in $C^\infty(\SU(N))$.
\end{cor}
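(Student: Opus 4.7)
The plan is to derive the corollary by aggregating Theorem \ref{thm:induced-cliff-mod-ind-formula} over all irreducible representations in $\widehat{\SU(N)}_\nat$ and then collapsing the resulting sum with Lemma \ref{lem:pu-e-nat-decomp-mult-calc}. First I would unpack the definition: pairing the distribution $\ind_{\SU(N)}(\gamma, \diracop_M) = \sum_\pi (\ind \diracop_\pi) \chi^\pi$ against a test function $\phi \in C^\infty(\SU(N))$ gives $\sum_{\pi \in \widehat{\SU(N)}_\nat} (\ind \diracop_\pi) \pairing{\chi^\pi}{\phi}$, so the content of the corollary is the evaluation of this sum.

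Next I would feed the formula from Theorem \ref{thm:induced-cliff-mod-ind-formula} into each summand and interchange the sums, producing
\[
\int_M \sum_{j=0}^{m/2} \frac{1}{j!}\, \hat{A}(R_M)_{m-2j} \sum_{\pi \in \widehat{\SU(N)}_\nat} d_\pi\, \Tr(\pi((\Omega^{(\sigma)})^j))\, \pairing{\chi^\pi}{\phi}.
\]
Only finitely many degrees in $j$ contribute (since $M$ is finite dimensional), and the $\pi$-sum converges because $\phi$ is smooth, so the interchange is legitimate. The inner $\pi$-sum is precisely of the type handled by Lemma \ref{lem:pu-e-nat-decomp-mult-calc} with $a = (\Omega^{(\sigma)})^j$; applying that lemma rewrites it as a finite sum over $Z(\SU(N))$ of pairings $\pairing{T((\Omega^{(\sigma)})^j)}{\lambda_g \phi}$, which is the structure on the right hand side of the corollary.

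What remains is purely a matter of reconciling normalizations: the factor $d_\pi$ appearing in Theorem \ref{thm:induced-cliff-mod-ind-formula}, the factor $\tfrac{1}{N}\chi^\nat(g)$ in Lemma \ref{lem:pu-e-nat-decomp-mult-calc}, and the factor $N$ from \eqref{eq:rel-curv-straighted-formula} must combine exactly as they do in the derivation of \eqref{eq:iter-curv-form-application}. In fact, viewed this way, the corollary is just the global integrated form of \eqref{eq:iter-curv-form-application}: the latter expresses the distribution-valued differential form $\ch_{\SU(N)}(\gamma, \vbdl{E}/\spinbdl)$ paired with $\phi$ as a finite sum over the center, and integrating against $\hat{A}(R_M)$ via the Clifford-module index formula \eqref{eq:cliff-mod-ind-thm} applied $\pi$-componentwise (justified by Theorem \ref{thm:induced-cliff-mod-ind-formula}) gives the claim.

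The only genuine obstacle is the bookkeeping of these normalization constants across the three identities above; no further analytic input is required, since the infinite-dimensional behavior of $\vbdl{E}$ has already been tamed by the trace-class property of $\rho \otimes \pi^\nat(\phi)$ and by the Fell-absorption device introduced in Section \ref{sec:conn-infin-dimens}.
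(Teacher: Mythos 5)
Your proposal is correct and is essentially the paper's own argument: both rest on exactly the same two ingredients, Lemma \ref{lem:pu-e-nat-decomp-mult-calc} and Theorem \ref{thm:induced-cliff-mod-ind-formula}, combined through character orthogonality. The only cosmetic difference is that the paper reduces a general $\phi$ to characters of irreducible representations (by conjugation-invariance, linearity and continuity) and checks the identity there, whereas you expand the defining sum over $\pi \in \widehat{\SU(N)}_\nat$, substitute the theorem, and resum via the lemma --- the same computation read in the opposite direction, with the normalization bookkeeping you flag left equally implicit in the paper's proof.
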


\begin{proof}
 Since both sides are invariant under conjugation for $\phi$, we may assume that $\phi$ is invariant under conjugation.  By continuity and linearity, we may assume that $\phi$ is a character of some irreducible representation of $\SU(N)$.  Then the assertion follows from Lemma \ref{lem:pu-e-nat-decomp-mult-calc} and Theorem \ref{thm:induced-cliff-mod-ind-formula}.
\end{proof}

Now, we can recover the fractional index formula \eqref{eq:mms-frac-index-formula} as a particular case of Corollary \ref{cor:ind-distribution-full-formula}.

\begin{cor}[\citelist{\cite{MR2258800}*{Theorem 2}\cite{MR2396250}*{Proposition 5 and Remark 1}}]
Let $\phi$ be a smooth function which agrees with the constant function $1$ on a neighborhood of $e$ and satisfies $\supp \phi \cap Z(\SU(N)) = \ensemble{e}$.  Then one has
\[
\pairing{\ind_{\SU(N)}(\gamma, \diracop_M)}{\phi} = \int_M \hat{A}(R_M).
\]
\end{cor}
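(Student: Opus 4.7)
The plan is to apply Corollary~\ref{cor:ind-distribution-full-formula} directly to the prescribed test function $\phi$ and observe that only the summand with $g = e$ and $j = 0$ in the right hand side survives. First I would eliminate the contributions from central elements $g \neq e$: the translate $(\lambda_g \phi)(x) = \phi(g^{-1} x)$ vanishes on a neighborhood of $e$ because $g^{-1}$ lies in $Z(\SU(N))$ but outside $\supp \phi$, and hence $\phi$ is identically zero near $g^{-1}$. Since each element of $\UniEnv(\liealg{su}(N))$ acts as a right-invariant differential operator whose evaluation at $e$ depends only on the germ of $\lambda_g \phi$ at $e$, every pairing $\pairing{T((\Omega^{(\sigma)})^j)}{\lambda_g \phi}$ is zero, independently of $j$.

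Next I would analyze the $g = e$ contribution, where $\lambda_e \phi = \phi$ is identically $1$ on a neighborhood of $e$. The $j = 0$ term is immediate: $(\Omega^{(\sigma)})^0 = 1$, $T(1) = 1$, and $\pairing{1}{\phi} = \phi(e) = 1$, contributing the top-degree form $\hat{A}(R_M)_m$. For $j \geq 1$, the form $(\Omega^{(\sigma)})^j$ takes values in the augmentation ideal $\ker \epsilon \subset \UniEnv(\liealg{su}(N))$, because $\Omega$ itself is $\liealg{su}(N)$-valued. Specializing the identity $\Tr(\pi(a)) = \Tr(\pi(T(a)))$ used in the proof of Lemma~\ref{lem:pu-e-nat-decomp-mult-calc} to the one-dimensional trivial representation gives $\epsilon \circ T = \epsilon$, so $T$ preserves $\ker \epsilon$. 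Hence $\lambda(T((\Omega^{(\sigma)})^j))$ is a differential operator built from vector fields without any constant term, and it annihilates any function constant near $e$; the pairing vanishes for every $j \geq 1$.

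Putting these two observations together, Corollary~\ref{cor:ind-distribution-full-formula} collapses to
\[
\pairing{\ind_{\SU(N)}(\gamma, \diracop_M)}{\phi} = \int_M \hat{A}(R_M)_m = \int_M \hat{A}(R_M),
\]
the final equality simply noting that only the top-degree component of the $\hat{A}$-form contributes to integration over $M$. The only step requiring real care is the preservation of the augmentation ideal under the conditional expectation $T$; once that is in place, the rest is a short germwise computation of derivatives at $e$, which I anticipate to be routine.
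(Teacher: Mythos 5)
Your proposal is correct and follows essentially the same route as the paper: apply Corollary~\ref{cor:ind-distribution-full-formula} and note that only the $g=e$, $j=0$ term survives, giving $\phi(e)\int_M \hat{A}(R_M)_m$. Your two vanishing arguments --- the support hypothesis killing the $g\neq e$ translates, and the observation that $T$ preserves the augmentation ideal so that the biinvariant operators $T\bigl((\Omega^{(\sigma)})^j\bigr)$, $j\geq 1$, annihilate functions constant near $e$ --- simply spell out what the paper's proof asserts tersely via the degree-$2j$ biinvariant differential operator remark.
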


\begin{proof}
For each $j$, the operator $T((\Omega^{(\sigma)})^j)$ is represented by a $\SU(N)$-biinvariant differential operator of degree $2j$.  Suppose that $\phi$ agrees with the constant function $1$ on a neighborhood of
$e$.  Then one has
\begin{align*}
\Tr(\rho(\phi)) &= \phi(e) = 1, & \Tr(\rho(\phi) F^j) &= \pairing{T((\Omega^{(\sigma)})^j)}{\phi} = 0 \quad (j > 0).
\end{align*}
Hence one has $\pairing{\Tr(\gamma e^{-F})}{\phi} = 1$ in
this case.  Consequently one obtains
\[
\pairing{\ind_{\SU(N)}(\gamma, \diracop_M)}{\phi} = \int_M \hat{A}(R_M),
\]
which proves the assertion.
\end{proof}

\begin{rmk}
 Since $\diracop_M$ is formulated as a transversely elliptic operator on the $\SU(N)$-manifold $P$, the Kirillov type formulation of the equivariant index formula for such operators by Berline--Vergne~\citelist{\cite{MR1369410}\cite{MR1369411}} might be also employed to prove the above result.  Our presentation is rather based on the Atiyah--Segal--Singer type formulation of the equivariant index theorem.
\end{rmk}

\paragraph{Acknowledgment} The author would like to thank R. Ponge for  
suggesting him to look at the papers of Mathai--Melrose--Singer and for  
many related discussions at the early stage of the research.  He would also like to thank S. Neshveyev, Y. Oshima, R. Nest, and Mathai V. for fruitful conversations.  He also benefited from conversations with E. Meinrenken, S. Yamamoto, R. Tomatsu, N. Ozawa, and M. Pichot.  Lastly but not least, he is deeply indebted for Y. Kawahigashi and D. E. Evans for numerous support throughout the period of the research.
\bibliography{mybibliography}

\end{document}